\newcommand{\fin}{\,\rule{1ex}{1.6ex}\,}
\title{\hspace*{1.25cm}On Gautschi's conjecture for generalized Gauss-Radau and Gauss-Lobatto formulae}
\author{H\'edi Joulak
\thanks{Laboratoire Paul Painlev{\'e}, UMR CNRS 8524, UFR de
Math\'ematiques, Universit\'e des Sciences et Technologies de
Lille, 59655 Villeneuve d'Ascq cedex -- France, e-mail: {\tt
hedi.joulak@gmail.com}} \& Bernhard Beckermann\thanks{Laboratoire
Paul Painlev{\'e}, UMR CNRS 8524, UFR de Math\'ematiques,
Universit\'e des Sciences et Technologies de Lille, 59655
Villeneuve d'Ascq cedex -- France, e-mail: {\tt
bbecker@math.univ-lille1.fr}} }
\date{~}
 \newtheorem{theorem}{Theorem}
 \newtheorem{remark}[theorem]{Remark}
 \newtheorem{corollary}[theorem]{Corollary}
 \newtheorem{lemme}[theorem]{Lemma}
\newenvironment{proof}%
{\rm \trivlist \item[\hskip \labelsep{\bf Proof. }]}%
{\hspace*{\fill}$\Box$\endtrivlist}
\def\nset{\hbox{l\hskip-2ptN}}
\def\pset{\hbox{I\hskip-2ptP}}
\begin{document}

\maketitle

\begin{abstract}
    Recently, Gautschi introduced so-called
    generalized Gauss-Radau and Gauss-Lobatto formulae which
    are quadrature formulae of Gaussian type involving not only
    the values but also the
    derivatives of the function at the endpoints. In the present note we show
    the positivity of the
    corresponding weights; this positivity has been conjectured already by Gautschi.

    As a consequence, we establish several convergence theorems
    for these quadrature formulae.
\end{abstract}

\noindent {\bf Keywords.} Quadrature formula, Gauss-Radau formula,
Gauss-Lobatto formula, orthogonality with varying weights.

\noindent {\bf AMS subject classifications.} 65D30, 42C05.


\section{Introduction}

In a recent paper \cite{Gau2}, Gautschi considered so-called
generalized Gauss-Radau and Gauss-Lobatto formulae which are
quadrature formulae of Gaussian type, i.e., having a highest
possible degree of exactness, and involving not only the values
but also the derivatives of the function at the endpoints of the
interval of integration. Such formulae are of the form
\begin{eqnarray} \nonumber
       \int f(t) \, d\lambda(t) &=&
      \sum_{j=0}^{r-1} \lambda_0^{(j)} f^{(j)}(a)
      + \sum_{j=1}^n \lambda_j f(\tau_j) +
      \sum_{j=0}^{s-1} (-1)^j \lambda_{n+1}^{(j)} f^{(j)}(b) +
      R_{n,r,s}(f)
      \\&=:& \label{gauss_radau_lobatto}
      Q_{n,r,s}(f) + R_{n,r,s}(f),
\end{eqnarray}
where $\lambda$ is a positive measure with support being a subset
of $[a,b]$ having an infinite number of points of increase in
$(a,b)$, and the integers $r,s\geq 0$ are the multiplicities of
the endpoints $a$ and $b$, respectively. In what follows we will
also allow the case $a=-\infty$ (or $b=+\infty$) of a possibly
unbounded support in which case only $r=0$ (or $s=0$,
respectively) is considered, that is, the corresponding sum in
$Q_{n,r,s}(f)$ does vanish.

It is well-known and easily verified that our requirement of
highest possible degree of exactness leads to a unique quadrature
formula of the form (\ref{gauss_radau_lobatto}) with degree of
exactness being equal to $2n+r+s-1$ \begin{equation}
\label{exactness}
      \forall f \in \mathbb P_{2n+r+s-1} : \qquad R_{n,r,s}(f)=0 ,
\end{equation}
where here and in what follows $\mathbb P_m$ denotes the space of
real polynomials of degree at most $m$. Here the free nodes
$\tau_j$ have to be chosen as to be the simple zeros of the $n$th
orthogonal polynomial with respect to the modified measure
$(x-a)^r (b-x)^s d\alpha(x)$ on $(a,b)$ being clearly positive,
and hence $\tau_j \in (a,b)$. Indeed, we get for $Q_{n,0,0}$ the
classical Gaussian quadrature rule, for $Q_{n,1,0}$ and
$Q_{n,0,1}$ the Gauss-Radau formulae and for $Q_{n,1,1}$ the
Gauss-Lobatto formula. For all these classical quadrature formulae
it is known that the weights $\lambda_1,...,\lambda_n$ and
possibly $\lambda_{0}^{(0)},\lambda_{n+1}^{(0)}$ are strictly
positive. The generalized Gauss-Radau (Gauss-Lobatto) formulae of
\cite{Gau2} are obtained for $s=0$ (and $r=s$, respectively).

Based on extensive numerical experiments for Jacobi, Laguerre and
elliptic Chebyshev measures using the numerical tools and methods
described in \cite{Gau1}, Gautschi conjectured in
\cite[Section~2.2 and Section~3.2]{Gau2} that the weights of the
generalized Gauss-Radau and Gauss-Lobatto formulae are all
strictly positive. He proved himself this conjecture for the inner
weights $\lambda_1,...,\lambda_n$ as well as for some boundary
weights, namely $\lambda_0^{(r-1)},\lambda_0^{(r-2)}>0$ for the
generalized Gauss-Radau formulae $Q_{n,r,0}$, and
$\lambda_0^{(r-1)},\lambda_0^{(r-2)},\lambda_{n+1}^{(r-1)},\lambda_{n+1}^{(r-2)}>0$
for the generalized Gauss-Lobatto formulae  $Q_{n,r,r}$. However,
the sign of the other weights remained an open question.

The aim of this paper is to show in Theorem~\ref{main} below that
Gautschi's conjecture is true, namely, all weights in the
quadrature formulae $Q_{n,r,s}$ are strictly positive. For this we
will show the slightly stronger result that suitable underlying
Lagrange polynomials (in the Hermite sense) do not change sign in
$(a,b)$. As a consequence, we obtain in
Corollary~\ref{banach_steinhaus} convergence of the quadrature
$Q_{n,r,s}(f)$ for fixed $r,s$ and $n\to \infty$ for sufficiently
differentiable functions $f$. The case of $r,s,n \to \infty$ is
discussed in Theorem~\ref{analytic} where we establish a geometric
rate of convergence for analytic $f$.

Before stating and proving our results in the next sections, we
should mention that generalized Gauss-Radau and Gauss-Lobatto
formulae are of major interest in different applications, and in
particular in moment preserving spline approximation on a compact
interval $[a,b]=[0,1]$, see \cite{FGM}, \cite[\S~3.3]{Gau1}, and
\cite[\S~4]{Gau2}: given a function $f\in \mathcal C^{m+1}([0,1])$
with moments $\mu_j=\int_0^1 t^j f(t)\, dt$, we are looking for a
partition $0=\tau_0<\tau_1<...<\tau_n<\tau_{n+1}=1$ and a spline
$\sigma$ of class $\mathcal C^{m-1}$ being piecewise $\mathbb P_m$
on each $[\tau_j,\tau_{j+1}]$ for $j=0,1,...,n$ and having the
same moments
\begin{equation} \label{spline_moments}
     \forall j=0,1,...,N : \qquad \int_0^1 t^j \sigma(t) \, dt =
     \mu_j ,
\end{equation}
with $N$ as large as possible, or in other words, the error
$f-\sigma$ is orthogonal to $\mathbb P_N$ with respect to Lebesgue
measure. By \cite[Theorem~3.61]{Gau1}, such a spline $\sigma$
exists for $N=2n+m$ if and only if the measure
$d\lambda(t)=(-1)^{m+1}f^{(m+1)}(t)/(m!) \, dt$ on $[0,1]$ has a
generalized Gauss-Radau quadrature formula $Q_{n,m+1,m+1}$ as in
(\ref{gauss_radau_lobatto}), and in this case the spline is given
by the quadrature data via $$
    \sigma(t)= \sum_{j=1}^n \lambda_j (\tau_j-t)_+^m  \, + \,
    \sum_{j=0}^m \frac{(t-1)^j}{j!} [ f^{(j)}(1)+(-1)^m \, m! \,
    \lambda_{n+1}^{(m-j)}] .
$$

\section{Positivity of the weights}

\begin{theorem}\label{main}
    All weights in the Gauss-type quadrature formula $Q_{n,r,s}$ given in
    (\ref{gauss_radau_lobatto}) are strictly positive for all integers
    $n,r,s\geq 0$
    \begin{eqnarray}
       \label{main1} j=1,2,...,n: && \lambda_j > 0 ,
       \\
       \label{main2} j=0,1,2,...,r-1: && \lambda_0^{(j)} > 0 ,
       \\
       \label{main3} j=0,1,2,...,s-1: && \lambda_{n+1}^{(j)} > 0 .
    \end{eqnarray}
\end{theorem}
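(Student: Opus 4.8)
The plan is to express every weight as the integral against $d\lambda$ of a suitable test polynomial and to exploit the orthogonality of the free-node polynomial. Write $\pi_n(x)=\prod_{j=1}^n(x-\tau_j)$ and $d\mu(x)=(x-a)^r(b-x)^s\,d\lambda(x)$, so that $\pi_n$ is the $n$-th orthogonal polynomial for $d\mu$. Since the rule is exact on $\mathbb P_{2n+r+s-1}$, each weight equals $\int f\,d\lambda$ for any $f$ of that degree matching the Hermite data that isolates it; moreover the value is unchanged if one adds to $f$ any multiple $(x-a)^r(b-x)^s\pi_n(x)c(x)$ with $c\in\mathbb P_{n-1}$, because $\int\pi_n c\,d\mu=0$. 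This freedom to modify a test polynomial by such a term without changing the weight will be used throughout.

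For the inner weights I would take $f(x)=(x-a)^r(b-x)^s\big[\pi_n(x)/(x-\tau_j)\big]^2$, which lies in $\mathbb P_{2n+r+s-1}$, is nonnegative on $[a,b]$, vanishes at $a$, $b$ and at every $\tau_k$ with $k\ne j$ to the required orders, and is positive at $\tau_j$. The quadrature then reduces to $\lambda_j\,f(\tau_j)=\int f\,d\lambda>0$, giving $\lambda_j>0$; equivalently the $\tau_j$ and the numbers $\lambda_j(\tau_j-a)^r(b-\tau_j)^s$ are exactly the nodes and positive weights of the ordinary Gauss rule for $d\mu$.

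The boundary weights are the heart of the matter, and here I would \emph{not} try to isolate a single weight by a nonnegative polynomial: this is impossible in general, since forcing double zeros at the interior nodes (needed for sign-definiteness) pins down the remaining factor and destroys its positivity. Instead the key device is a reduction that lowers the multiplicity. Replacing $d\lambda$ by $(x-a)\,d\lambda$ and $r$ by $r-1$ leaves $d\mu$, hence $\pi_n$ and all interior nodes, unchanged; comparing the two rules on $(x-a)f$ for $f\in\mathbb P_{2n+r+s-2}$ and using $[(x-a)f]^{(j)}(a)=j\,f^{(j-1)}(a)$, the uniqueness of the formula yields, with tildes denoting the rule for $(x-a)\,d\lambda$,
\[
  \tilde\lambda_0^{(i)}=(i+1)\,\lambda_0^{(i+1)}\quad(i=0,\dots,r-2),\qquad \tilde\lambda_j=(\tau_j-a)\,\lambda_j,
\]
together with the analogous identities at $b$. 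An induction on the total multiplicity $r+s$ then propagates positivity from the lower rule to all $\lambda_0^{(i)}$ with $i\ge1$ (and symmetrically to all $\lambda_{n+1}^{(i)}$ with $i\ge1$), reducing the entire theorem to the single weight of value type, $\lambda_0^{(0)}$, at each configuration.

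To settle $\lambda_0^{(0)}>0$ I would use that, by the induction just described, the higher weights $\lambda_0^{(1)},\dots,\lambda_0^{(r-1)}$ are already known to be positive. I therefore seek $f\in\mathbb P_{2n+r+s-1}$ nonnegative on $[a,b]$, vanishing to order two at each $\tau_k$ and to order $s$ at $b$ (so the interior and $b$-terms drop out), with $f(a)>0$ and $f^{(i)}(a)\le0$ for $i=1,\dots,r-1$; writing $f=(b-x)^s\pi_n(x)^2q(x)$ with $q\ge0$ on $[a,b]$ makes the first constraints automatic and turns the endpoint conditions into linear conditions on $q\in\mathbb P_{r-1}$. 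For such an $f$,
\[
  \lambda_0^{(0)}f(a)=\int f\,d\lambda-\sum_{i=1}^{r-1}\lambda_0^{(i)}f^{(i)}(a)>0,
\]
since the integral is positive and each subtracted term is $\le0$. The main obstacle is precisely the construction of this $q$: I must choose a nonnegative $q$ on $[a,b]$ that forces all the derivatives $f^{(i)}(a)$, $1\le i\le r-1$, to be nonpositive while keeping $f(a)>0$. I expect to achieve this either by an explicit choice built from $\pi_n$ and its logarithmic derivative at $a$ — whose sign is controlled because $a$ lies to the left of every node, so that $\pi_n'(a)/\pi_n(a)=\sum_k(a-\tau_k)^{-1}<0$, generalizing Gautschi's treatment of the top two derivatives — or, absent a closed form, by a Farkas/convexity argument showing the relevant cone of admissible sign patterns is nonempty. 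For $r=1$ no derivative conditions arise and $f=(b-x)^s\pi_n^2$ settles the base case at once.
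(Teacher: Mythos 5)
Your inner-weight argument and your multiplicity-lowering reduction are both sound: the identity $\tilde\lambda_0^{(i)}=(i+1)\lambda_0^{(i+1)}$ for the rule with measure $(x-a)\,d\lambda$ and multiplicities $(r-1,s)$ follows correctly from exactness plus uniqueness (one caution: under this same reduction the relation at $b$ is triangular, $\tilde\lambda_{n+1}^{(i)}=(b-a)\lambda_{n+1}^{(i)}-(i+1)\lambda_{n+1}^{(i+1)}$, not diagonal; the diagonal identity at $b$ requires the mirrored reduction by $(b-x)$, which is presumably what you intended). The genuine gap is exactly where you flag it: the base step $\lambda_0^{(0)}>0$ hinges on producing $q\in\mathbb P_{r-1}$, nonnegative on $[a,b]$, such that $f=(b-x)^s\pi_n(x)^2q(x)$ has $f(a)>0$ and $f^{(i)}(a)\le 0$ for $1\le i\le r-1$, and you do not construct it. ``I expect to achieve this\dots or, absent a closed form, by a Farkas/convexity argument'' is a hope, not a proof; no argument you give shows the relevant cone is nonempty, so the theorem is not established. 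Moreover your structural claim used to motivate the detour --- that isolating a single boundary weight by a polynomial of constant sign is ``impossible in general'' because the double zeros at the $\tau_k$ pin down the remaining factor and destroy positivity --- is simply false, as the paper's proof shows.

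The paper closes precisely this gap, and does so for \emph{every} $j$ at once, making the induction unnecessary. Set $\omega(t)=(b-t)^s\prod_{k=1}^n(\tau_k-t)^2$ and let $\Omega_m$ be the $m$th Taylor partial sum of $1/\omega$ at $t=a$. Then $P_j(t)=\frac{(t-a)^j}{j!}\,\Omega_{r-j-1}(t)\,\omega(t)$ lies in $\mathbb P_{2n+r+s-1}$, and a Leibniz computation shows it reproduces the Hermite data exactly ($D_a^kP_j=\delta_{kj}/j!$ for $j\le k\le r-1$, all other node data zero), whence $\lambda_0^{(j)}=Q_{n,r,s}(P_j)=\int P_j\,d\lambda$. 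Sign-definiteness comes from a one-line lemma: if $P(t)=\prod_{j}(x_j-t)$ with all $x_j>c$, then every Taylor coefficient of $1/P$ at $c$ is strictly positive (induction on the number of factors via the Leibniz rule). Hence $\Omega_{r-j-1}(t)=\sum_{\ell}(t-a)^\ell D_a^\ell(1/\omega)$ is positive on $(a,b)$ and $P_j\ge 0$ there, giving $\lambda_0^{(j)}>0$; the weights at $b$ follow by the reflection $t\mapsto -t$. In your notation the case $j=0$ of this construction is exactly the object you were missing: $q=\Omega_{r-1}$ is nonnegative on $[a,b]$ and achieves $f(a)=1$ with $f^{(i)}(a)=0$ for $1\le i\le r-1$, which even satisfies your constraints with equality. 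So your reduction framework is a correct and genuinely different route for the higher-order boundary weights, but as submitted the proof is incomplete, and the truncated-Taylor-series lemma is the missing idea that makes either route work.
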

\begin{proof}
The property (\ref{main1}) has already been
established by Gautschi \cite{Gau2}, for the sake of completeness
we repeat here the proof: consider $$
     p_j(t)=\frac{\omega_j(t)}{\omega_j(\tau_j)} , \quad
     \omega_j(t)=(t-a)^r (b-t)^s
     \prod_{k=1,k\neq j}^n (\tau_k-t)^2 ,
$$ then it is clear by construction that $p_j(\tau_j)=1$, and
$p_j\in \mathbb P_{2n+r+s-2}$ is non negative on $(a,b)$.
According to (\ref{exactness}), we may conclude that
$R_{n,r,s}(p_j)=0$, and thus $$
       \lambda_j = Q_{n,r,s}(p_j) = \int p_j(t) d\lambda(t) > 0 ,
$$ as claimed in (\ref{main1}).

For a proof of (\ref{main2}), consider the polynomial
\begin{equation} \label{main2ter}
     P_j(t) = P_{n,r,s,a,j}(t)= \frac{(t-a)^j}{j!} \Omega_{r-j-1}(t) \omega(t) ,
             \quad
              \omega(t)=(b-t)^s \prod_{k=1}^n (\tau_k-t)^2 ,
\end{equation}
where $\Omega_m$ denotes the $m$th partial sum of the Taylor
expansion of $1/\omega$ at $t=a$. Writing shorter $$
       D^k_c f = \frac{f^{(k)}(c)}{k!} ,
$$ we observe that, by construction, $$
       \forall k=1,...,n : \,\, D^0_{\tau_k} P_j = 0 , \quad
       \forall k=0,...,s-1 : \,\, D^k_b P_j = 0 , \quad
       \forall k=0,...,j-1 : \,\, D^k_a P_j = 0 .
$$ Furthermore, for $k=j,j+1,...,r-1$ we find by the Leibniz
product rule and by definition of $\Omega_m$ that $$
       D^k_a P_j =
       \sum_{\ell=0}^k \Bigl[ D^{\ell}_a \frac{(t-a)^j}{j!} \Bigr] \,
       \Bigl[ D^{k-\ell}_a (\Omega_{r-j-1} \omega) \Bigr]
       = \frac{1}{j!} D^{k-j}_a (\Omega_{r-j-1} \omega)
       = \frac{1}{j!} \delta_{k-j,0} .
$$ Since in addition $P_j \in \mathbb P_{2n+r+s-1}$, we may
conclude that \begin{equation} \label{main2bis}
    \lambda_0^{(j)} = \lambda_0^{(j)} \, j! \, D_a^j P_j =
    Q_{n,r,s}(P_j) = \int P_j(t) \, d\lambda(t) .
\end{equation}
In order to discuss the sign of the expression on the right, we
need the following auxiliary result.

\begin{lemme}
    Let $P(t)=\prod_{j=1}^m (x_j-t)$, with $x_1,...,x_m\in
    (c,+\infty)$, then
    $$
       \forall \ell = 0, 1 ,... : \qquad D_c^\ell \Bigl( \frac{1}{P} \Bigr) > 0
       .
    $$
\end{lemme}
\begin{proof}
   For $m=1$ we find that $D_c^\ell \bigl( \frac{1}{P}
   \bigr)=\frac{1}{(x_1-c)^{\ell+1}}>0$. The general case follows by
   induction on $m$ using the Leibniz product rule.
\end{proof}

As a consequence of the preceding lemma, we find that $$
      \Omega_m(t) = \sum_{\ell=0}^m (t-a)^\ell D_a^\ell \Bigl(\frac{1}{\omega}\Bigr)
$$ is strictly positive on $(a,b)$ for all $m \geq 0$, and thus
$P_j$ defined in (\ref{main2ter}) is also non negative in $(a,b)$.
It follows from (\ref{main2bis}) that $\lambda_0^{(j)}>0$, as
claimed in (\ref{main2}).

Finally, for a proof of (\ref{main3}) we observe that the variable
transformation $t'=-t$ allows to exchange the roles of
$(r,a,\lambda_0^{(j)})$ and $(s,b,\lambda_{n+1}^{(j)})$ in the
quadrature formula (\ref{gauss_radau_lobatto}), and in particular
gives a factor $(-1)^j$ for the $j$th derivative. Hence the
assertion (\ref{main3}) follows from (\ref{main2}), but it is also
straight forward to give a direct proof following the above lines.
\end{proof}

\begin{remark}\label{positivity}
  {\rm Notice that also $D_{\tau_k}^1 P_j=0$ for $k=1,2,...,n$.
  Thus, for polynomial interpolation (in the sense of Hermite)
  at the node $a$ with multiplicity $r$, the nodes $\tau_1,...,\tau_n$
  with multiplicity $2$,
  and $b$ with multiplicity $s$, we have shown implicitly that
  the Lagrange polynomials $P_j=P_{n,r,s,a,j}$ associated with
  the $j$th derivative at $a$ do
  not change sign on $[a,b]$. It follows by symmetry that the
  Lagrange polynomial $P_{n,r,s,b,j}$ associated with the $j$th
  derivative at $b$
  has constant sign $(-1)^j$ on $[a,b]$. However, the Lagrange
  polynomials associated with $\tau_j$ may very well change sign on $(a,b)$.}
\end{remark}

The positivity of the quadrature weights is the essential key for
proving the following convergence result both for generalized
Gauss-Radau and for Gauss-Lobatto formulae.

\begin{corollary}\label{banach_steinhaus}
   Let $[a,b]$ be compact, and $q:=\max\{ r-1,s-1\}$. Then for
   any $f\in \mathcal C^{q}([a,b])$ we have
   $$
         \lim_{n\to \infty} Q_{n,r,s}(f)=\int f(t) \, d\lambda(t).
   $$
\end{corollary}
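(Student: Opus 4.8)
The plan is to obtain the convergence from the Banach--Steinhaus principle, regarding each $Q_{n,r,s}$ as a linear functional on the Banach space $\mathcal C^{q}([a,b])$ endowed with the norm $\|f\|_{\mathcal C^q}=\max_{0\le k\le q}\|f^{(k)}\|_{\infty}$ (reading $\mathcal C^0=\mathcal C([a,b])$ when $q\le 0$). Two ingredients are required: convergence on a dense subspace, and a uniform bound on the functional norms $\|Q_{n,r,s}\|$. For the dense subspace I would take the polynomials, which are dense in $\mathcal C^q([a,b])$ for this norm — approximate $f^{(q)}$ uniformly by a polynomial and integrate $q$ times, matching the lower-order Taylor data of $f$ at $a$. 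Since by (\ref{exactness}) the degree of exactness $2n+r+s-1$ tends to infinity, every fixed polynomial $p$ satisfies $Q_{n,r,s}(p)=\int p\,d\lambda$ as soon as $2n+r+s-1\ge\deg p$, so convergence on polynomials is automatic.

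The crux is thus the uniform bound, and here the positivity established in Theorem~\ref{main} is decisive, since it lets me replace the absolute values of the weights by the weights themselves. Using $r-1\le q$ and $s-1\le q$ to control $|f^{(j)}(a)|$ and $|f^{(j)}(b)|$ by $\|f\|_{\mathcal C^q}$, one obtains
$$
  |Q_{n,r,s}(f)|\le \|f\|_{\mathcal C^q}\,S_n,\qquad
  S_n:=\sum_{j=0}^{r-1}\lambda_0^{(j)}+\sum_{j=1}^{n}\lambda_j+\sum_{j=0}^{s-1}\lambda_{n+1}^{(j)},
$$
so it remains to bound $S_n$ independently of $n$. The inner weights are disposed of at once by testing against $f\equiv 1$: exactness gives $\lambda_0^{(0)}+\sum_{j=1}^{n}\lambda_j+\lambda_{n+1}^{(0)}=\int 1\,d\lambda=:\mu_0$, and positivity forces $\sum_{j=1}^{n}\lambda_j\le\mu_0$.

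The main obstacle is to bound the boundary weights $\lambda_0^{(j)}$ (and, symmetrically, $\lambda_{n+1}^{(j)}$) uniformly in $n$, since $Q_{n,r,s}(1)$ carries no information about $\lambda_0^{(j)}$ for $j\ge 1$. The idea is to mimic the construction (\ref{main2ter}) from the proof of (\ref{main2}) but to delete the $n$-dependent factor $\prod_k(\tau_k-t)^2$, which is legitimate because for a mere upper bound the test polynomial only needs to be non-negative on $[a,b]$, not to vanish at the interior nodes. Concretely, for $0\le j\le r-1$ I would set
$$
  w_j(t)=\frac{(t-a)^j}{j!}\,\widetilde\Omega_{r-j-1}(t)\,(b-t)^s,
$$
where $\widetilde\Omega_m$ is the $m$th Taylor partial sum of $1/(b-t)^s$ at $t=a$; by the preceding Lemma $\widetilde\Omega_m>0$ on $[a,b]$, hence $w_j\ge 0$ there, and $w_j$ has the fixed degree $r+s-1\le 2n+r+s-1$. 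The same Leibniz computation as in the theorem yields $D_a^k w_j=\tfrac{1}{j!}\delta_{k-j,0}$ for $0\le k\le r-1$ and $D_b^k w_j=0$ for $0\le k\le s-1$, so by exactness
$$
  \lambda_0^{(j)}\le \lambda_0^{(j)}+\sum_k\lambda_k\,w_j(\tau_k)=Q_{n,r,s}(w_j)=\int w_j\,d\lambda\le \mu_0\,\|w_j\|_{\infty}.
$$
As $w_j$ does not depend on $n$, this is the required uniform bound; the analogous estimate for $\lambda_{n+1}^{(j)}$ follows from the $t\mapsto -t$ symmetry noted in the proof of (\ref{main3}) (cf. Remark~\ref{positivity}). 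Combining these with $\sum_{j=1}^n\lambda_j\le\mu_0$ gives $S_n\le C$ with $C$ independent of $n$, and a standard three-$\varepsilon$ argument — splitting $Q_{n,r,s}(f)-\int f\,d\lambda$ through a polynomial approximant of $f$ — then delivers $Q_{n,r,s}(f)\to\int f\,d\lambda$ for every $f\in\mathcal C^q([a,b])$.
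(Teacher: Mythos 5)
Your proposal is correct and follows essentially the same route as the paper: Banach--Steinhaus on $\mathcal C^q([a,b])$, exactness on the dense subspace of polynomials, inner weights and the $j=0$ boundary weights controlled by $Q_{n,r,s}(1)=\int d\lambda$, and the remaining boundary weights bounded via $n$-independent non-negative test polynomials --- your $w_j$ is exactly the paper's $P_{0,r,s,a,j}$, i.e.\ the construction (\ref{main2ter}) with $n=0$. The only cosmetic difference is that the paper sums these into a single polynomial $P=\sum_{j\ge 1}P_{0,r,s,a,j}+\sum_{j\ge 1}(-1)^jP_{0,r,s,b,j}$ and bounds the whole tail at once, whereas you bound each $\lambda_0^{(j)}$ and $\lambda_{n+1}^{(j)}$ individually.
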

\begin{proof}
   In the sequal of this proof we suppose that $r,s\geq 1$, the
   extension of the proof for $r=0$ or $s=0$ is straight forward.
   It is not difficult to see that the
   space $X=\mathcal C^{q}([a,b])$ equipped with the norm
   $$
                \| f \| = \max_{0\leq j \leq q} \max_{x\in [a,b]}
                |f(x)|
   $$
   gives a Banach space: the completeness follows immediately from
   the well-known completeness of the space $\mathcal C([a,b])$
   with respect to the maximum norm on $[a,b]$, see,
   e.g., \cite[p.~258]{quef}. Also, from
   \cite[Theorem~6.3.2]{davis} it follows that
   polynomials are dense in $(X,\| \cdot \|)$. 
   Hence we are
   prepared to apply the Banach-Steinhaus Theorem:
   for $f$ being a
   polynomial of degree $k$, we obtain from (\ref{exactness}) that
   $$
          \forall n > \frac{k-r-s}{2} :   \qquad
          Q_{n,r,s}(f)=\int f(t) \, d\lambda(t),
   $$
   i.e., we have convergence for a dense subset of $(X,\|
   \cdot \|)$. 
   For obtaining convergence in $X$ it only remains to show
   that the norm of
   the linear functionals $Q_{n,r,s}$ is bounded uniformly in $n$.
   Writing more explicitly
   $\lambda_0^{(j)}(n),\lambda_{j}(n),\lambda_{n+1}^{(j)}(n)$
   for the quadrature weights occurring in
   (\ref{gauss_radau_lobatto}), we obtain the simple upper bound
   \begin{equation} \label{norm_Q}
      \| Q_{n,r,s}  \| \leq
            \sum_{j=0}^{r-1} \lambda_0^{(j)}(n) +
            \sum_{j=1}^n \lambda_j(n)  +
      \sum_{j=0}^{s-1} \lambda_{n+1}^{(j)}(n) ,
   \end{equation}
   since, according to Theorem~\ref{main}, all weights occurring in
   these sums are positive. We observe that
   $$
            \lambda_0^{(0)}(n) +
            \sum_{j=1}^n \lambda_j(n)  +
      \lambda_{n+1}^{(0)}(n) = Q_{n,r,s}(1)= \int d\lambda(t) <
      \infty.
   $$
   For the remaining terms we consider the polynomial (not depending on
   $n$)
   $$
          P(x):= \sum_{j=1}^{r-1} P_{0,r,s,a,j}(x) +
      \sum_{j=1}^{s-1} (-1)^j P_{0,r,s,b,j}(x)
   $$ of degree $\leq r+s-1$, where we recall from Remark~\ref{positivity}
   that each term in the above sums, representing up to a sign a Lagrange polynomial in
   the sense of Hermite at the abscissa $a$ with multiplicity $r$
   and $b$ with multiplicity $s$, is non negative on $[a,b]$.
   Hence this polynomial $P$ is also non negative
   on $[a,b]$, implying that, again by the positivity of the
   weights,
   $$
       \sum_{j=1}^{r-1} \lambda_0^{(j)}(n) +
      \sum_{j=1}^{s-1} \lambda_{n+1}^{(j)}(n)
      \leq Q_{n,r,s}(P) = \int P(t) \, d\lambda(t)
   $$
   for all $n \geq 0$, where in the last equality we have used
   (\ref{exactness}).
   Hence the expression on the right of (\ref{norm_Q}) is bounded
   uniformly in $n$, and the Banach-Steinhaus Theorem allows us to
   conclude that there is convergence as claimed in the assertion
   of Corollary~\ref{banach_steinhaus}.
\end{proof}

\begin{remark}\label{composite}
  {\rm By using classical arguments we may also estimate the rate
  of convergence of our quadrature formula: according to
  (\ref{exactness}), we find the following bound for the error
  \begin{equation} \label{rate}
      | R_{n,r,s}(f) | \leq \Bigl( \int d\lambda(t) + \| Q_{n,r,s} \|\Bigr)
      \, \inf_{p \in \mathbb P_{2n+r+s-1}} \| f - p \| .
  \end{equation}
  Here we can give a quite rough explicit upper bound for
  $\| Q_{n,r,s} \|$ which is independent of $n$:
  by having a closer look at the construction
  of the polynomials
  $P_{0,r,s,a,j}$ from (\ref{main2ter}) we see that
  $$
    \forall x\in [a,b] :
       \qquad 0 \leq P_{0,r,s,a,j}(x) \leq \frac{(x-a)^j}{j!}
       \sum_{\ell=0}^{r-j-1} \frac{(x-a)^\ell}{(b-a)^{s+\ell}}
       \leq r(b-a)^r ,
  $$ and a similar bound for $(-1)^j P_{0,r,s,b,j}(x)$.
  Consequently, we learn from the previous proof and especially
  from (\ref{norm_Q}) that
  \begin{equation} \label{rate2}
      \| Q_{n,r,s} \| \leq \Bigl(1+ r^2(b-a)^r + s^2(b-a)^s\Bigr) \int
      d\lambda(t).
  \end{equation}
  Using (\ref{rate}) and (\ref{rate2}), it is possible to show
  also convergence for a composite quadrature rule based on
  suitably shifted and scaled counterparts of $Q_{n,r,s}$, and
  to derive an explicit rate of convergence in terms of the size
  of the largest underlying subinterval.}
\end{remark}

\section{Rate of convergence for analytic functions}

Denote by $P_f\in \mathbb P_{2n+r+s-1}$ the polynomial
interpolating $f$ with multiplicity $r$ in $a$, multiplicity $s$
in $b$ and multiplicity $2$ at the other abscissae $\tau_j$
occurring in (\ref{gauss_radau_lobatto}) for $j=1,...,n$, then
using the Cauchy error formula for polynomial interpolation we get
from (\ref{exactness}) that the error for our quadrature formula
for $f\in \mathcal C^{2n+r+s}([a,b])$ may be written in terms of
divided differences as
\begin{eqnarray}
    R_{n,r,s}(f) &=& \nonumber
    \int \Bigl( f(t)-P_f(t)\Bigr) \, d\lambda(t)
    \\&=& \nonumber
    \int  (t-a)^r (t-b)^s \prod_{j=1}^n (t-\tau_j)^2
    [\underbrace{a,...a}_r,\underbrace{\tau_1,\tau_1}_2,...,\underbrace{\tau_n,\tau_n}_2,
    \underbrace{b,...,b}_s,t] f \, d\lambda(t)
    \\&=& \label{divdif}
    [a,...a,\tau_1,\tau_1,...,\tau_n,\tau_n,
    b,...,b,\xi_{n,r,s}] f \int  (t-a)^r (t-b)^s \prod_{j=1}^n (t-\tau_j)^2
    \, d\lambda(t)
    \\&=& \nonumber \frac{f^{(2n+r+s)}(\xi'_{n,r,s})}{(2n+r+s)!} \int  (t-a)^r (t-b)^s \prod_{j=1}^n (t-\tau_j)^2
    \, d\lambda(t)
\end{eqnarray}
with $\xi_{n,r,s},\xi'_{n,r,s} \in [a,b]$, since the polynomial
factor in the integral is of unique sign. Denote by $
   p_{n,r,s}
$ 
the 
orthonormal polynomial with respect to the modified weight
$(t-a)^r(b-t)^s \, d\lambda(t)$. We suppose that $\lambda$ has the
compact support $[-1,1]$ with $a\leq-1<1\leq b$, then it is
well-known from, e.g., \cite[Section 11.11]{He2} that
\begin{equation} \label{r=0,s=0}
     \limsup_{n \to \infty} | R_{n,0,0}(f)|^{1/n} \leq 1/\rho^2 < 1
\end{equation}
provided that $f$ is analytic in the closed ellipse $\mathcal
E_\rho$ with foci $\pm 1$ and half axes $(\rho \pm 1/\rho)/2$, and
that this result is optimal for measures satisfying the Szeg\"o
condition. A similar rate is shown to be true for fixed $r,s>0$,
and we are curious about the rate of convergence if $r=r_n$ and
$s=s_n$ such that $r_n/n \to \alpha\geq 0$,  $s_n/n \to \beta\geq
0$.

We first notice that $\xi_{n,r,s}\in [-1,1]$ in (\ref{divdif}).
Hence, for a (set of) contour(s) $\mathcal C$ encercling once
$[-1,1]$ and $a,b$ and staying in a neighborhood of $[a,b]$ where
$f$ is analytic, we get from the Cauchy formula for divided
differences and from (\ref{divdif}) $$
   R_{n,r_n,s_n}(f) = \frac{1}{2\pi i} \int_{\mathcal C}
      \frac{f(z)}{(z-\xi_{n,r_n,s_n})(z-a)^{r_n} (b-z)^{s_n} p_{n,r_n,s_n}(z)^2}
      dz,
$$ and thus \begin{equation} \label{rate_varying}
      \limsup_{n \to \infty} | R_{n,r_n,s_n}(f)|^{1/n} \leq
      \limsup_{n\to \infty} \max_{z\in \mathcal C}
      \frac{1}{|z-a|^\alpha |z-b|^\beta |p_{n,r_n,s_n}(z)|^{2/n}} .
\end{equation}
Thus we are left with the question of $n$th roots asymptotics for
orthogonal polynomials with varying weights, which has been the
subject of a number of publications over the last twenty years,
see, e.g., the monograph \cite[Chapters~III.6 and VII]{sato} of
Saff and Totik or the monograph \cite[Chapter~3]{stto} of Stahl
and Totik. Since the negative logarithm of the absolute value of a
polynomial is a logarithmic potential of some discrete measure,
here the right tool to describe the $n$th root asymptotic is to
consider a weighted equilibrium problem in logarithmic potential
theory: the potential and the energy of a Borel measure $\mu$ with
compact support are defined by $$
    U^\mu(y)=\int \log(\frac{1}{|x-y|}) \, d\mu(x), \qquad
    I(\mu) =
    \int \int \log(\frac{1}{|x-y|}) \, d\mu(x)\, d\mu(y).
$$ Define the external field $Q(x)=U^\sigma(x)$, $\sigma=
\frac{\alpha}{2}\delta_a+\frac{\beta}{2}\delta_b$, with $\delta_c$
the Dirac unit point measure at $x=c$, then there exists a unique
probability measure $\mu$ supported on $\Sigma=[-1,1]$ which under
all such measures has minimal weighted energy $I(\mu)+2 \int Q \,
d\mu$, see \cite[Theorem~I.1.3]{sato}. By the same Theorem (see
also \cite[Theorem~I.5.1]{sato}) we also have the equilibrium
conditions that $U^\mu(x)+Q(x)$ is equal to some constant $F$ on
the support of $\mu$, and $\geq F$ in $[-1,1]$. Then according to
\cite{gora} (see also \cite{stto})
\begin{equation} \label{n_root asymptotics}
      \Bigl( \limsup_{n\to \infty} \max_{z\in \mathcal C}
      \frac{1}{|z-a|^\alpha |z-b|^\beta |p_{n,r_n,s_n}(z)|^{2/n}} \Bigr)
      \leq \exp\Bigl( 2 \, \sup_{z\in \mathcal C} \Bigl(U^\mu(z) + Q(z)- F \Bigr) \Bigr),
\end{equation}
with equality iff the starting measure of orthogonality $\lambda$
is sufficiently regular.

For our external field, the extremal measure may be found
explicitly: since $Q$ is convex on $[-1,1]$, it follows from
\cite[Theorem~IV.1.11]{sato} that the support of $\mu$ is an
interval of the form $[A,B]$, with $-1\leq A < B \leq 1$, and also
$a<A$ and $B<b$ since $Q$ becomes $+\infty$ at $a$, $b$ and thus
in a neighborhood of these points we may not have equality in the
equilibrium condition.

\begin{lemme}\label{representation}
   Denoting by $z \mapsto \phi(z)=(2z-A-B+2\sqrt{(z-A)(z-B)})/(B-A)$
   the conformal Riemann map sending the
   exterior of $[A,B]$ onto the exterior of the closed unit disk,
   then  we
   have for $z\in \mathbb C$
   \begin{equation} \label{potential}
           \exp(2 (U^\mu(z) + Q(z)- F)) = \frac{1}{|\phi(z)|^2}
           \Bigl| \frac{1-\phi(z) \phi(a)}{\phi(z)(\phi(z)-\phi(a))}\Bigr|^\alpha
           \Bigl| \frac{1-\phi(z) \phi(b)}{\phi(z)(\phi(z)-\phi(b))}\Bigr|^\beta
 ,
   \end{equation}
   where $-1\leq A < B \leq 1$ are defined by the system
   \begin{eqnarray*} &&
         \frac{\alpha}{2} ( \sqrt{\frac{A-a}{B-a}} - 1 )
         + \frac{\beta}{2} ( \sqrt{\frac{b-A}{b-B}} - 1 )
         \left\{\begin{array}{ll}
            = 1 & \mbox{if $B<1$,}
            \\
            \leq 1 & \mbox{if $B=1$,}
         \end{array}\right.
         \\ &&
         \frac{\alpha}{2} ( \sqrt{\frac{B-a}{A-a}} - 1 )
         + \frac{\beta}{2} ( \sqrt{\frac{b-B}{b-A}} - 1 )
         \left\{\begin{array}{ll}
            = 1 & \mbox{if $A>-1$,}
            \\
            \leq 1 & \mbox{if $A=-1$.}
         \end{array}\right.
   \end{eqnarray*}
\end{lemme}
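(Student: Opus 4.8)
The plan is to reduce everything to the classical Green's function of the complement of $[A,B]$. Since the external field is itself a logarithmic potential, $Q=U^\sigma$ with $\sigma=\frac{\alpha}{2}\delta_a+\frac{\beta}{2}\delta_b$, the equilibrium condition $U^\mu+Q=F$ on $\mathrm{supp}\,\mu=[A,B]$ reads $U^{\mu+\sigma}=F$ there. Write $g(z,w)$ for the Green's function of $\Omega:=\overline{\mathbb C}\setminus[A,B]$ with pole at $w\in\{a,b,\infty\}$, and set $M:=1+\tfrac{\alpha+\beta}{2}$, the total mass of $\mu+\sigma$. First I would form the auxiliary function
$$\Phi(z):=\bigl(U^\mu(z)+Q(z)-F\bigr)+M\,g(z,\infty)-\tfrac{\alpha}{2}\,g(z,a)-\tfrac{\beta}{2}\,g(z,b).$$
The point is that every singularity cancels: near $a$ the term $-\tfrac{\alpha}{2}\log|z-a|$ coming from $U^\sigma$ is matched by $-\tfrac{\alpha}{2}g(z,a)\sim+\tfrac{\alpha}{2}\log|z-a|$, similarly at $b$, while at $\infty$ the logarithmic growth $-M\log|z|$ of $U^{\mu+\sigma}$ is cancelled by $M\,g(z,\infty)\sim M\log|z|$. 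Hence $\Phi$ is bounded and harmonic in $\Omega$ and extends continuously to $[A,B]$, where it vanishes because $U^\mu+Q-F=0$ and $g(\cdot,w)=0$ there. By the maximum principle $\Phi\equiv0$, which yields the representation
$$U^\mu(z)+Q(z)-F=-M\,g(z,\infty)+\tfrac{\alpha}{2}\,g(z,a)+\tfrac{\beta}{2}\,g(z,b).$$

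For the explicit formula (\ref{potential}) I would substitute the standard expressions in terms of $\phi$: namely $g(z,\infty)=\log|\phi(z)|$ and, for the real points $c\in\{a,b\}$ lying outside $[A,B]$, $g(z,c)=\log\bigl|\tfrac{1-\phi(z)\phi(c)}{\phi(z)-\phi(c)}\bigr|$. Writing $g(z,c)-\log|\phi(z)|=\log\bigl|\tfrac{1-\phi(z)\phi(c)}{\phi(z)(\phi(z)-\phi(c))}\bigr|$, multiplying the displayed identity by $2$ and exponentiating gives exactly (\ref{potential}); this step is routine bookkeeping of the factors $1/|\phi|^2$, $|\cdots|^\alpha$, $|\cdots|^\beta$.

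It remains to pin down $A$ and $B$. Balayage identifies $\mu$ as $\mu=M\,\omega_{[A,B]}-\widehat\sigma$, where $\omega_{[A,B]}$ is the arcsine (equilibrium) measure of $[A,B]$ and $\widehat\sigma$ is the balayage of $\sigma$ onto $[A,B]$; since balayage preserves mass, $\mu$ automatically has total mass $M-\tfrac{\alpha+\beta}{2}=1$, so the normalization imposes no constraint. Using the classical densities
$$\frac{d\omega_{[A,B]}}{dt}=\frac{1}{\pi\sqrt{(t-A)(B-t)}},\qquad \frac{d\widehat{\delta_c}}{dt}=\frac{\sqrt{|(A-c)(B-c)|}}{\pi\,|t-c|\,\sqrt{(t-A)(B-t)}},$$
the density of $\mu$ on $(A,B)$ equals $N(t)/\bigl(\pi\sqrt{(t-A)(B-t)}\bigr)$ with $N(t)=M-\tfrac{\alpha}{2}\tfrac{\sqrt{(A-a)(B-a)}}{t-a}-\tfrac{\beta}{2}\tfrac{\sqrt{(b-A)(b-B)}}{b-t}$. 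Now I would invoke the variational characterization: at an endpoint interior to $[-1,1]$ (a soft edge, $B<1$ or $A>-1$) the support cannot terminate with an unbounded density, forcing $N$ to vanish there, whereas at a hard edge ($B=1$ or $A=-1$) only $N\ge0$ is required. Evaluating $N(B)=0$ and $N(A)=0$ and simplifying the quotients of square roots (e.g. $\tfrac{\sqrt{(A-a)(B-a)}}{B-a}=\sqrt{\tfrac{A-a}{B-a}}$) produces precisely the two relations of the stated system, with each equality replaced by the inequality $\le1$ at a hard edge.

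The main obstacle I anticipate is the rigorous justification of the step $\Phi\equiv0$: one must check that $U^\mu$ is continuous up to $[A,B]$ and that $\Phi$ has a removable singularity at $\infty$, and identify $F$ as the Frostman constant (this uses the regularity of $\mu$, guaranteed by the explicit density). A secondary delicate point is the soft/hard-edge dichotomy: the claim that a soft edge forces $N=0$ relies on the equilibrium inequality $U^\mu+Q\ge F$ on all of $[-1,1]$ from \cite[Theorem~I.1.3]{sato} and \cite[Theorem~I.5.1]{sato}, since a non-vanishing $N$ at a soft edge would violate this inequality just outside $[A,B]$. Both points are standard in weighted potential theory and can be quoted from \cite{sato}.
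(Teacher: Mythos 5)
Your proof is correct, and it diverges from the paper's in a genuinely interesting way. For the Green-function representation both arguments start from the same balayage identity $\mu+\widehat\sigma=(1+\frac{\alpha}{2}+\frac{\beta}{2})\,\omega_{[A,B]}$, but the paper then simply reads off $U^\mu+Q-F=\frac{\alpha}{2}g_{[A,B]}(\cdot,a)+\frac{\beta}{2}g_{[A,B]}(\cdot,b)-(1+\frac{\alpha}{2}+\frac{\beta}{2})g_{[A,B]}(\cdot,\infty)$ from the balayage--Green-function formulas \cite[Eqns.~(II.4.32) and~(II.4.35)]{sato}, whereas you rederive it self-containedly by cancelling the logarithmic singularities at $a$, $b$, $\infty$ and applying the maximum principle to the bounded harmonic remainder $\Phi$; the price, which you correctly flag, is having to verify continuity of $U^\mu$ up to $[A,B]$ and everywhere (not merely q.e.) equality in the equilibrium condition, both of which your explicit density supplies. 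The real difference is in the determination of $A,B$: the paper maximizes the F-functional $F(A,B)=\log\frac{B-A}{4}-\frac{1}{\pi}\int_A^B Q(x)\,dx/\sqrt{(x-A)(B-x)}$ of \cite[Theorem~IV.1.5]{sato} and takes partial derivatives, the constraints $A\geq -1$, $B\leq 1$ yielding the hard-edge inequalities, while you compute the density $N(t)/(\pi\sqrt{(t-A)(B-t)})$ of $\mu=M\omega_{[A,B]}-\widehat\sigma$ from the classical balayage densities and impose $N=0$ at soft edges, $N\geq 0$ at hard edges; a direct check (using $\frac{|\phi(a)|-1}{|\phi(a)|+1}=\sqrt{\frac{A-a}{B-a}}$ and $\frac{\phi(b)+1}{\phi(b)-1}=\sqrt{\frac{b-A}{b-B}}$) confirms this reproduces exactly the stated system. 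Your route buys the explicit density of $\mu$ and a transparent edge dichotomy; the paper's buys brevity by outsourcing the variational bookkeeping to \cite[Theorem~II.4.4 and Lemma~II.1.15]{sato}. One caution: your first formulation of the soft-edge condition (``the support cannot terminate with an unbounded density'') is not by itself a theorem in weighted potential theory; the valid argument is the one you give afterwards --- positivity of $d\mu$ forces $N(B)\geq 0$, while if $N(B)>0$ the already-proved representation gives $U^\mu(t)+Q(t)-F\approx -c\,N(B)\sqrt{t-B}<0$ just to the right of a soft edge $B<1$, contradicting $U^\mu+Q\geq F$ on $[-1,1]$, whence $N(B)=0$ --- so make that the argument of record rather than a fallback.
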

\begin{proof}
   Denote by $\widehat \sigma$ the balayage measure of $\sigma$
   onto $[A,B]$, then by \cite[Theorem~II.4.4]{sato}, $\mu +
   \widehat \sigma$ is a positive measure of total mass
   $(1+\frac{\alpha}{2}+\frac{\beta}{2})$
   supported on $[A,B]$, and from the
   equilibrium conditions we know that its potential is constant
   quasi everywhere on $[A,B]$. However, by, e.g.,
   \cite[Theorem~I.1.3]{sato}, the only measure satisfying this
   relation is $(1+\frac{\alpha}{2}+\frac{\beta}{2}) \omega_{[A,B]}$,
   with $\omega_{[A,B]}$ the Robin measure of $[A,B]$, i.e.,
   the equilibrium
   measure with external field $0$ on $[A,B]$.
   Hence from \cite[Eqns.~(II.4.32) and~(II.4.35)]{sato} and the fact that
   $U^\mu + Q- F$ equals zero on $[A,B]$ we may conclude that
   $$
           U^\mu(z) + Q(z)- F = \frac{\alpha}{2} g_{[A,B]}(z,a)
           + \frac{\beta}{2} g_{[A,B]}(z,b) -
           (1+\frac{\alpha}{2}+\frac{\beta}{2})
           g_{[A,B]}(z,\infty) ,
   $$
   where by $x \mapsto g_{[A,B]}(x,y)$ we denote the Green function
   of the domain
   $\mathbb C\setminus [A,B]$ with pole at $y \not\in [A,B]$.
   Taking into account the link \cite[Eqn.~(II.4.45)]{sato}
   between the Green function and the Riemann map, relation
   (\ref{potential}) follows.
   Finally, the so-called F-functional of \cite[Theorem~IV.1.5]{sato}
   \begin{eqnarray*}
       F(A,B)&=&\log(\frac{B-A}{4}) - \frac{1}{\pi}\int_A^B \frac{Q(x)\, dx}{\sqrt{(x-A)(B-x)}}
       \\&=&
       (1+\frac{\alpha}{2}+\frac{\beta}{2})\log(\frac{B-A}{4})
       + \frac{\alpha}{2} g_{[A,B]}(a,\infty)
       + \frac{\beta}{2} g_{[A,B]}(b,\infty)
   \end{eqnarray*}
   must take its global maximum on $-1 \leq A < B \leq 1$
   at the endpoints of the support
   of the extremal measure $\mu$. Taking partial derivatives,
   we arrive at the given system of
   equations and inequalities for $A$, $B$ as in
   \cite[Theorem~II.4.4 and Lemma~II.1.15]{sato}, compare
   with \cite[Example~II.1.17]{sato} for the special case $a=-1$ and $b=1$
   of Jacobi weights.
\end{proof}

\begin{figure}
  \centerline{\epsfig{file=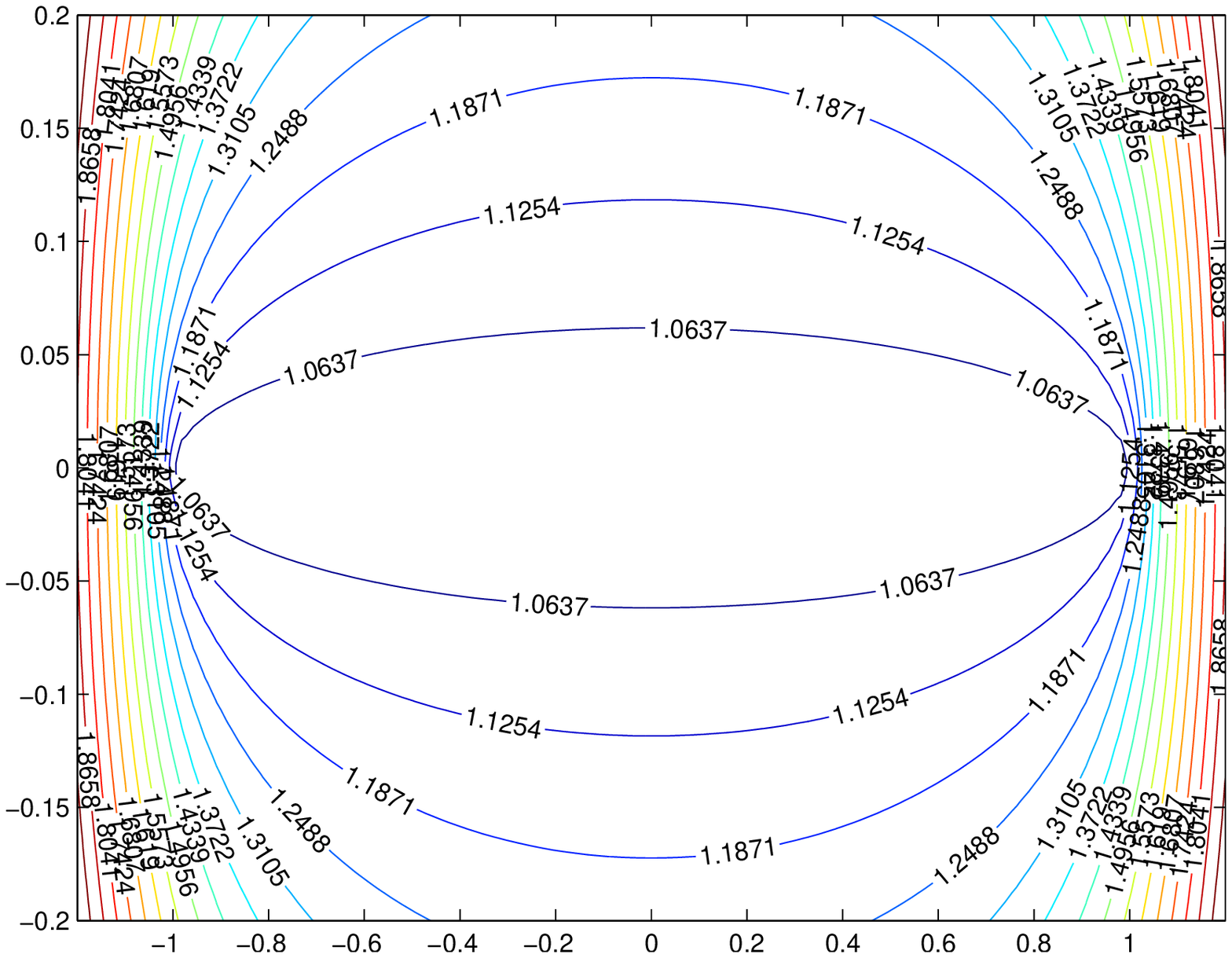, scale=0.5}~~~
              \epsfig{file=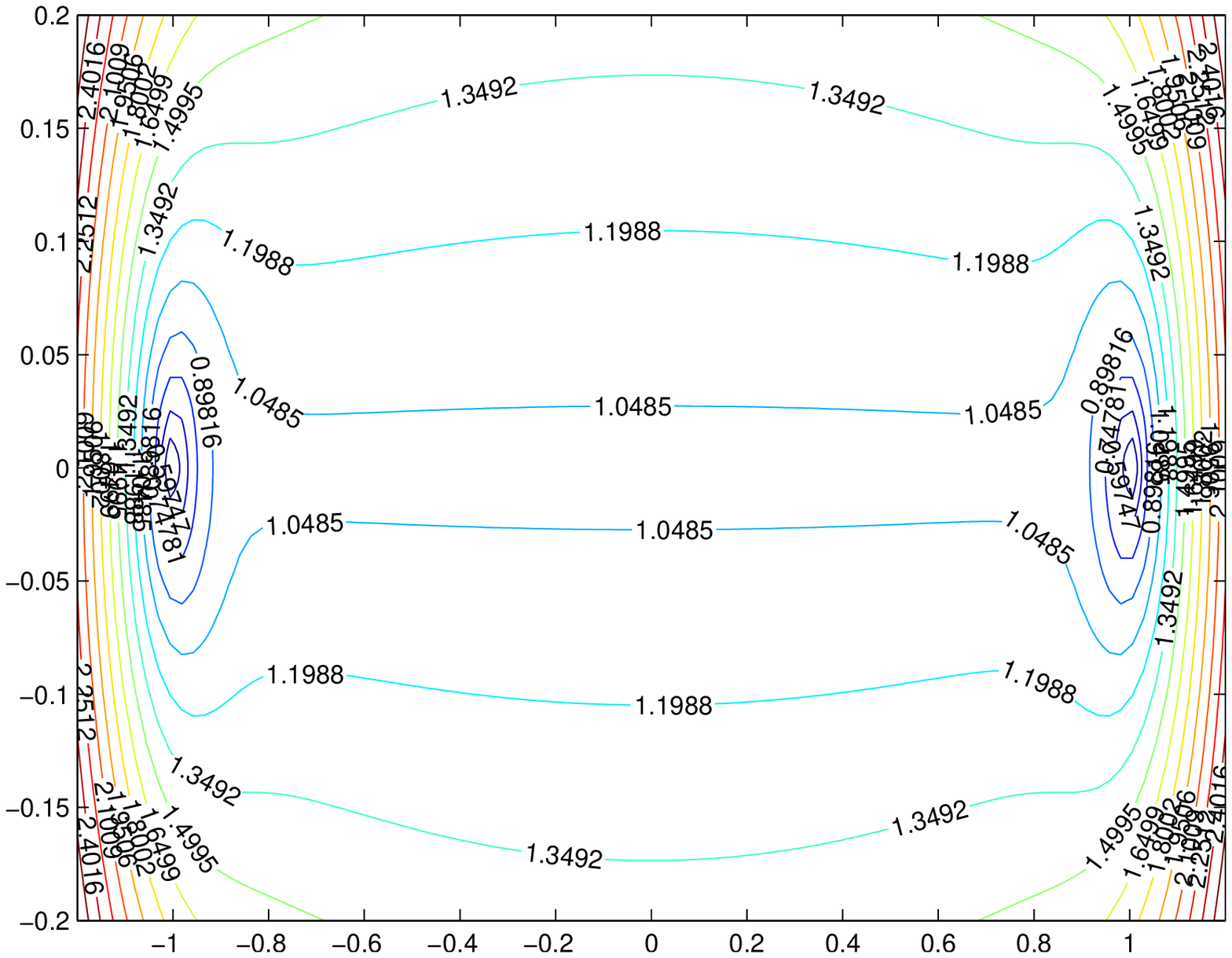, scale=0.5}}
  \caption{{\it Level curves for different choices of the parameters
  $a,\alpha,b,\beta$: on the left we find the classical case $\alpha=\beta=0$,
  here $[A,B]=[-1,1]$ and the level lines are ellipses. On the right
  $\alpha=\beta=1$ and $b=-a=1$, the endpoints of the support of orthogonality,
  here the level sets for $\rho>1$ are connected with connected
  complement, in this case $B=-A\approx 0.86603$.}}\label{figure1}
\end{figure}

\begin{figure}
  \centerline{\epsfig{file=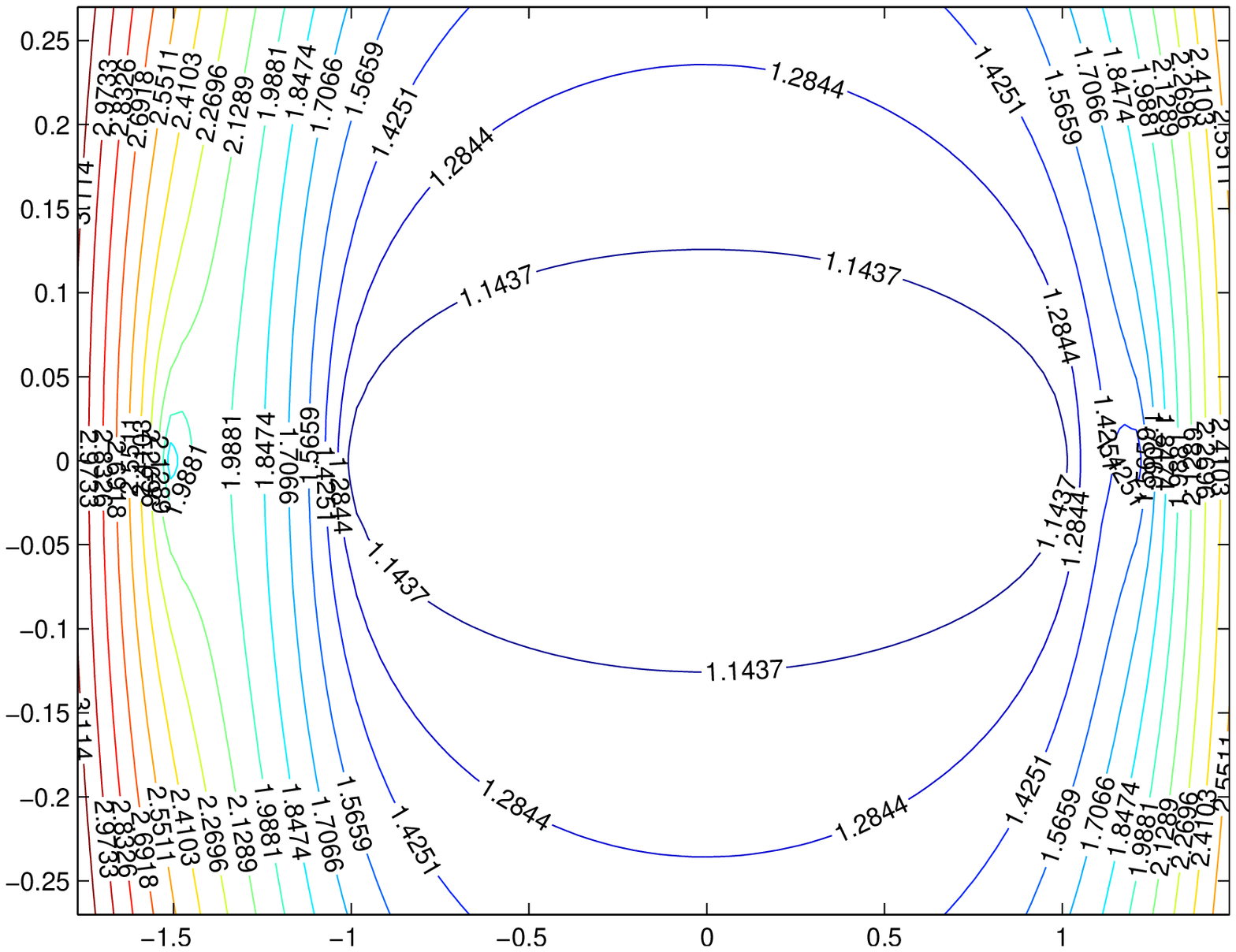, scale=0.5}~~~
              \epsfig{file=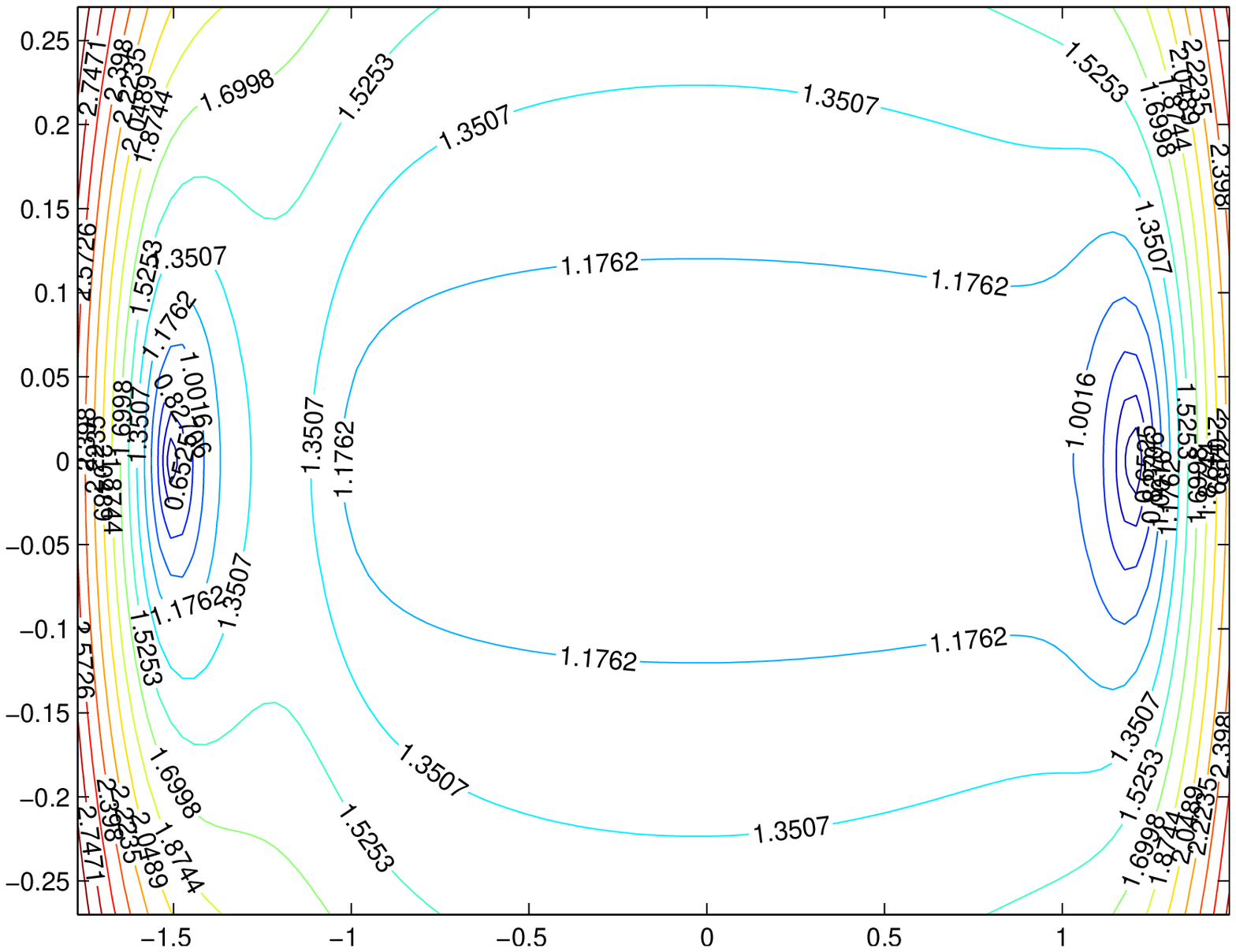, scale=0.5}}
  \caption{{\it Level curves for different choices of the parameters
  $\alpha=\beta$: in both cases, $a=-1.5,b=1$, but
  on the left we find $\alpha=\beta=0.2$, on the right
  $\alpha=\beta=1$, leading to $[A,B]=[-1,1]$ in both cases.
  In particular, for small $\rho>1$ we find three connected
  components for our level sets.}}\label{figure2}
\end{figure}

\begin{figure}
  \centerline{\epsfig{file=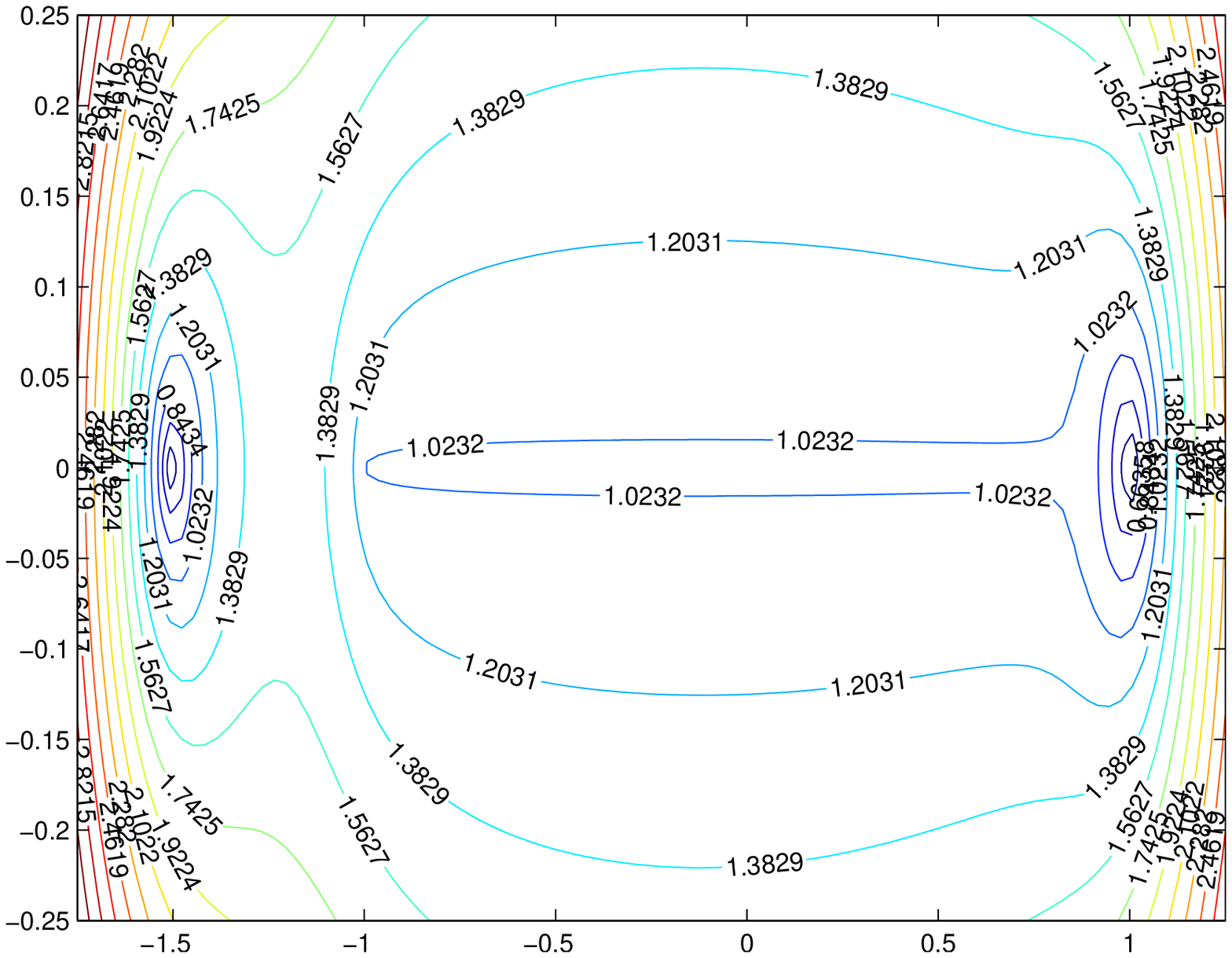, scale=0.5}~~~
              \epsfig{file=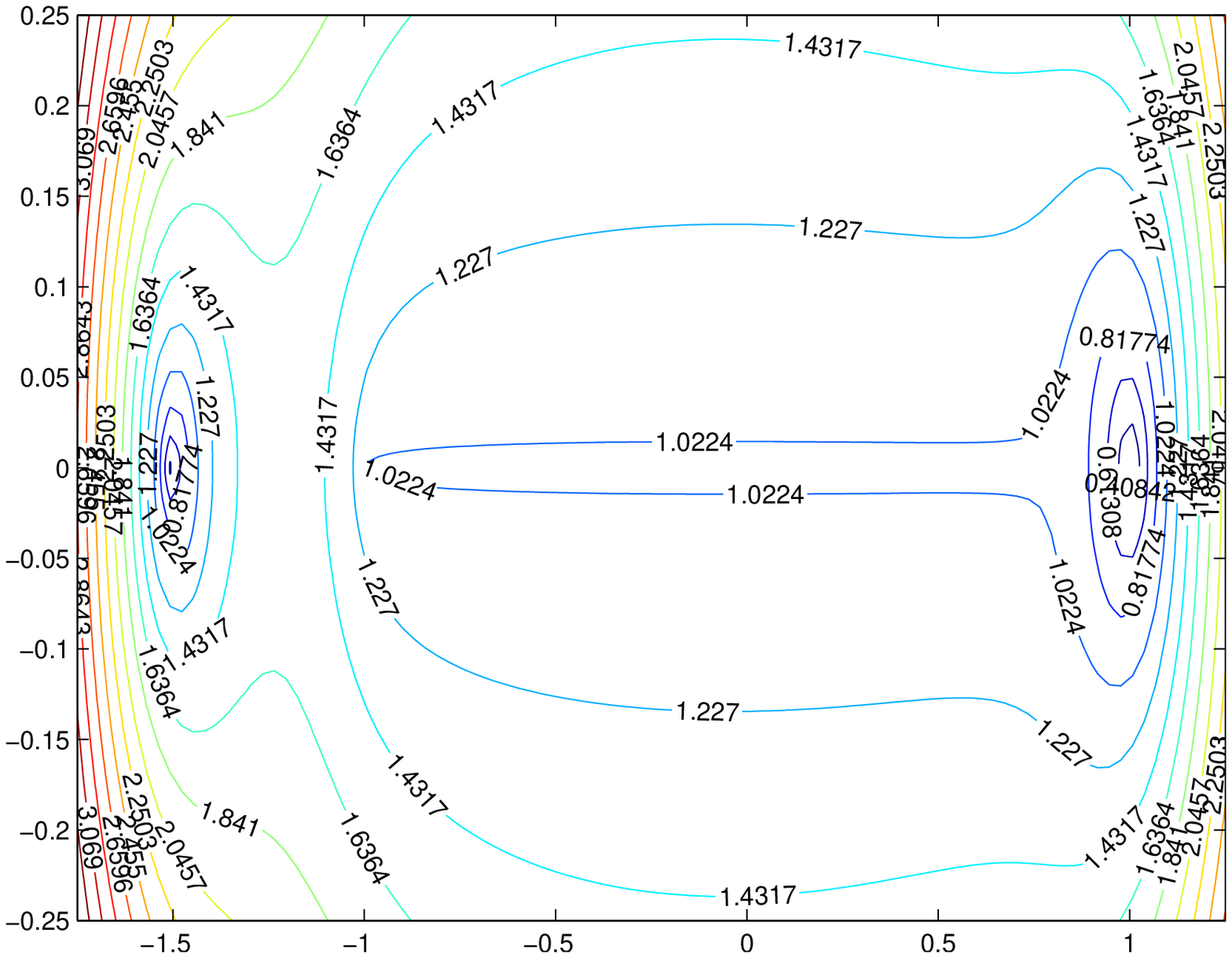, scale=0.5}}
  \caption{{\it Level curves for different choices of the parameters
  $\beta$: in both cases, $a=-1.5,b=1$, $\alpha=1$, but
  $\beta =1$ on the left (leading to $[A,B]=[-1,0.8402]$),
  and $\beta=1.2$ on the right (leading to $[A,B]=[-1,0.79334]$).
  Here for small $\rho>1$ we find two connected
  components for our level sets.}}\label{figure3}
\end{figure}

In order to exploit Lemma~\ref{representation}, we have to
consider for $\rho>1$ the (closed) level sets $\mathcal
E_\rho(a,\alpha,b,\beta)$ being the complement of the set of $z\in
\mathbb C \setminus[A,B]$ where the right-hand side of
(\ref{potential}) is $<
\rho^{-2}<1$. Notice that for $\alpha=\beta=0$ we have
$[A,B]=[-1,1]$, and we obtain for the complement the reqirement
$|\phi(z)|>\rho$, that is, $\mathcal E_\rho(a,0,b,0)$ coincides
with the ellipses $\mathcal E_\rho$ considered before.  Also, by
the equilibrium conditions, $[-1,1] \cup \{a,b\} \subset \mathcal
E_\rho(a,\alpha,b,\beta)$ for all $\rho
> 1$, and from the maximum principle for analytic functions we may
conclude that $\mathcal E_\rho(a,\alpha,b,\beta)$ has a connected
complement containing a neighborhood of infinity, and at most
three connected components, one of them containing $a$ (if
$\alpha>0$), a second $b$ (if $\beta>0$), and the third the
interval $[-1,1]$, see Figure~\ref{figure2}. Moreover, if $A>-1$
(and similarly $B<1$), then $\mu$ is also an extremal measure if
we replace $\Sigma=[-1,1]$ by the larger set $[a,1]$, and hence
$[a,1]\subset \mathcal E_\rho(a,\alpha,b,\beta)$, showing that
there are only at most two connected components (see
Figure~\ref{figure3}).

If we choose as $\mathcal C$ the boundary of some level set
$\mathcal E_\rho(a,\alpha,b,\beta)$ for some $\rho > 1$, this (set
of) contour(s) encircles once $a,b$, and the interval $[-1,1]$. A
combination of (\ref{rate_varying}), (\ref{n_root asymptotics}),
and (\ref{potential}) leads to the following result.

\begin{theorem}\label{analytic}
   Suppose that $r_n/n \to \alpha \geq 0$, $s_n/n \to \beta \geq 0$,
   and let $f$ be analytic in
   $\mathcal E_\rho(a,\alpha,b,\beta)$ for some $\rho > 1$, then
   $$
      \limsup_{n \to \infty} | R_{n,r_n,s_n}(f)|^{1/n} \leq
      \rho^{-2}.
   $$
\end{theorem}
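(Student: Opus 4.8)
The plan is to choose the integration contour $\mathcal C$ in the Cauchy representation of the error as the boundary of a suitable level set, and then simply chain together the three estimates already prepared above. Fix an auxiliary $\rho'$ with $1<\rho'<\rho$ and set $\mathcal C:=\partial\mathcal E_{\rho'}(a,\alpha,b,\beta)$. Since the level sets increase with the parameter, $\mathcal E_{\rho'}\subseteq\mathcal E_\rho(a,\alpha,b,\beta)$, so $f$ is analytic in a neighborhood of $\mathcal C$. Moreover, by the structural description recorded before the theorem, $\mathcal E_{\rho'}$ has connected complement containing a neighborhood of infinity and at most three connected components, one containing $a$, one containing $b$, and one containing $[-1,1]$; these three target sets are strictly interior, because on $[-1,1]$ the equilibrium conditions give $U^\mu+Q-F\ge 0$, whence the right-hand side of (\ref{potential}) is $\ge 1>{\rho'}^{-2}$ there, while near $a$ and $b$ it tends to $+\infty$. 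Consequently $\mathcal C$ is a (set of) Jordan contour(s) encircling once each of $a$, $b$ and $[-1,1]$; since $\xi_{n,r_n,s_n}\in[-1,1]$, all poles of the integrand in (\ref{divdif}) lie in the interior, so $\mathcal C$ is admissible.

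With this $\mathcal C$ I would then run the chain of inequalities. Estimate (\ref{rate_varying}) bounds $\limsup_n|R_{n,r_n,s_n}(f)|^{1/n}$ by $\limsup_n\max_{z\in\mathcal C}\bigl(|z-a|^\alpha|z-b|^\beta|p_{n,r_n,s_n}(z)|^{2/n}\bigr)^{-1}$. The $n$th-root asymptotics (\ref{n_root asymptotics})---of which only the upper bound is needed, and this holds with no regularity hypothesis on $\lambda$---dominate this quantity by $\exp\bigl(2\sup_{z\in\mathcal C}(U^\mu(z)+Q(z)-F)\bigr)$. Finally, Lemma~\ref{representation} identifies $\exp(2(U^\mu(z)+Q(z)-F))$ with the explicit right-hand side of (\ref{potential}), which by the defining property of the level set equals ${\rho'}^{-2}$ at every point of $\mathcal C=\partial\mathcal E_{\rho'}$. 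Hence the supremum over $\mathcal C$ is exactly ${\rho'}^{-2}$, giving $\limsup_n|R_{n,r_n,s_n}(f)|^{1/n}\le{\rho'}^{-2}$; letting $\rho'\uparrow\rho$ yields the asserted bound $\rho^{-2}$.

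The analytic substance has already been settled in the preparatory material (the weighted equilibrium problem, the closed form of the potential in Lemma~\ref{representation}, and the varying-weight $n$th-root asymptotics), so the genuinely remaining work is essentially bookkeeping about the contour. I expect the main obstacle to be the careful justification that $\mathcal C$ is a legitimate, non-degenerate contour encircling the required sets exactly once and that the right-hand side of (\ref{potential}) is truly constant equal to ${\rho'}^{-2}$ along it---both of which rest on the geometry of the level sets and on the equilibrium conditions $U^\mu+Q-F=0$ on $[A,B]$ and $U^\mu+Q-F\ge 0$ on $[-1,1]$. Once this is in place, the passage $\rho'\uparrow\rho$ is routine and merely removes the auxiliary parameter.
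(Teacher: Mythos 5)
Your proposal is correct and takes essentially the same route as the paper: the authors likewise choose $\mathcal C$ to be the boundary of a level set $\mathcal E_\rho(a,\alpha,b,\beta)$ and obtain the theorem as a direct combination of (\ref{rate_varying}), (\ref{n_root asymptotics}) and (\ref{potential}). Your additional care---introducing the auxiliary level $\rho'<\rho$ so that $f$ is analytic near $\mathcal C$, and checking via the equilibrium conditions that the contour encircles $a$, $b$ and $[-1,1]$---merely makes explicit what the paper leaves implicit in the sentence preceding the theorem.
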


   One may show that again this estimate is best possible if
   the orthogonality measure $\lambda$ is sufficiently regular.
   In addition, if $\lambda$ satisfies the Szeg\H{o} condition,
   then following \cite{totik} we may obtain strong
   asymptotics for the orthonormal polynomials $p_{n,r_n,s_n}$,
   and hence with help of steepest descend an asymptotic
   equivalent of $| R_{n,r_n,s_n}(f)|$.

Different examples for the level sets of the preceding theorem are
given in Figure~\ref{figure1}, Figure~\ref{figure2}, and
Figure~\ref{figure3}. Though the shapes of these sets are quite
different depending on the parameters, there seem to be clearly an
indication: if the function $f$ is regular in larger neighborhoods
around $a$ and $b$, but not in such a large neighborhood around
for instance $0$ (which is true for instance for the function
$f(z)=1/(1+z^2)$), then by the choice of larger $\alpha,\beta$ one
improves the rate of geometric convergence of our generalized
Gauss-Lobatto quadrature formula.


\begin{thebibliography}{99}
\bibitem{davis} P. J. Davis, Interpolation and Approximation,
 Blaisdell Publishing Co., New York, 1963.
\bibitem{FGM}
M. Frontini, W. Gautschi, G.V. Milovanovic, Moment-preserving
spline approximation of finite intervals, Numer. Math. 50 (1987),
503-518.
\bibitem{Gau1}
W. Gautschi, Orthogonal polynomials: computation and
approximation, Numerical Mathematics and Scientific Computation,
Oxford University Press, Oxford, 2004.
\bibitem{Gau2}
W. Gautschi, Generalized Gauss-Radau and Gauss-Lobatto formulae,
BIT, 44 (2004) 711-720.
\bibitem{gora}
  A.A. Gonchar and E.A. Rakhmanov, Equilibrium measure and the
  distribution of zeros of extremal polynomials, Math. Sb 125 (1984) 117-127.
\bibitem{He2} P. Henrici, Applied and Computational Complex
Analysis, vol. 2, Wiley, New York, 1977.
\bibitem{quef} H. Queff\'elec, CI. Zuily, El\'ements d'Analyse pour
l'Agr\'egation, Masson, Paris, 1995.
\bibitem{sato}
E. B. Saff and V. Totik, Logarithmic Potentials with External
Fields, Springer, Berlin, 1997.
\bibitem{stto}
H. Stahl and V. Totik, General Orthogonal Polynomials, Cambridge
University Press, Cambridge, 1992.
\bibitem{totik} V. Totik, Weighted approximation with varying weight.
Lecture Notes in Mathematics 1569, Springer-Verlag, Berlin, 1994.

\end{thebibliography}
\end{document}